\newtheorem{thm}{Theorem}[section]
\newtheorem{lem}[thm]{Lemma}
\newtheorem{prop}[thm]{Proposition}
\newtheorem{cor}[thm]{Corollary}
\newtheorem*{cor*}{Corollary \ref{MainCor}}
\newtheorem*{thm*}{Theorem \ref{ThmUnique}}
\theoremstyle{definition}
\newtheorem{claim}[thm]{Claim}
\newtheorem{defn}[thm]{Definition}
\DeclareMathOperator{\Aut}{Aut}
\DeclareMathOperator{\Rist}{Rist}
\DeclareMathOperator{\Stab}{Stab}
\DeclareMathOperator{\Fix}{Fix}
\DeclareMathOperator{\ord}{ord}
\newcommand*{\level}{\mathcal L}
\newcommand*{\Z}{\ensuremath{\mathbf{Z}}}
\newcommand*{\N}{\ensuremath{\mathbf{N}}}
\newcommand*{\fgrbg}{finitely generated regular branch group}
\newcommand*{\GGS}{\textsf{GGS}}
\begin{document}
\title{Weakly maximal subgroups in regular branch groups}
\author[K. Bou-Rabee]{Khalid bou-Rabee}
\thanks{K.B. was supported in part by NSF grant DMS-1405609, P.-H. L. and T. N. were supported by Swiss National Science Foundation.}
\address{School of Mathematics, CCNY CUNY, New York City, New York, USA}
\email{khalid.math@gmail.com}
\author[P.-H. Leemann]{Paul-Henry Leemann}
\address{Department of Mathematics, University of Geneva, Geneva, Switzerland}
\email{Paul-Henry.Leemann@unige.ch}
\author[T. Nagnibeda]{Tatiana Nagnibeda}
\address{Department of Mathematics, University of Geneva, Geneva, Switzerland}
\email{tatiana.smirnova-nagnibeda@unige.ch}
\date{\today}

\begin{abstract}
Let $G$ be a \fgrbg{} acting by automorphisms on a regular rooted tree $T$. It is well-known that stabilizers of infinite rays in $T$ (aka parabolic subgroups) are weakly maximal subgroups in $G$, that is, maximal among subgroups of infinite index. We show that, given a finite subgroup $Q\leq G$, $G$ possesses uncountably many automorphism equivalence classes of weakly maximal subgroups containing $Q$.  In particular, for Grigorchuk-Gupta-Sidki type groups this implies that they have uncountably many automorphism equivalence classes of weakly maximal subgroups that are not parabolic.
\end{abstract}
\subjclass[2010]{Primary: 20E26; Secondary: 20F65, 20F36}
\keywords{Branch groups, weakly maximal subgroups, Grigorchuk group, parabolic subgroups}
\maketitle
\setcounter{tocdepth}{2}


\section{Introduction}\label{sec:introduction}

Let $T$ be a locally finite regular rooted tree.
Among groups that act on $T$ by automorphisms, branch groups are of particular interest (see Section~\ref{sec:preliminaries} for all the relevant definitions).
One such example is the Grigorchuk group~$\Gamma$, the first example of a finitely generated group of intermediate growth \cite{GriFirst}.
Branch groups have interesting subgroup structure.
For example, Pervova showed \cite{Pervova}  that all maximal subgroups of $\Gamma$ are of finite index  (there are seven of them, all of index $2$). Pervova's result extends to groups abstractly commensurable with $\Gamma$ \cite{MR2009443}, as well as to other examples of branch groups, such as  Gupta-Sidki groups $G_p$ \cite{Pervova05} and groups abstractly commensurable with them \cite{Garrido}, or multi-edge spinal groups \cite{MultiEdge}; but not to all branch groups (see \cite{Bondarenko}). 
In view of this rigidity of maximal subgroups, the next step in understanding the subgroup structure of branch groups is to study \emph{weakly  maximal subgroups}, that is the maximal elements among the subgroups of infinite index.
Observe that in a finitely generated group every subgroup of infinite index is contained in a weakly maximal subgroup.
Weakly maximal subgroups of regular branch group are the main topic of this note.

Before stating our results, we review some natural examples of weakly maximal subgroups.
Let $G$ be a group of automorphisms of a locally finite regular rooted tree $T$.
The action of $G$ on the tree $T$ extends to an action of $G$ on the boundary $\partial T$ of the tree, by homeomorphisms.
Stabilizers of boundary points are called \emph{parabolic subgroups}.
If $G$ acts on $T$ in a weakly branched way, then all parabolic subgroups are infinite and pairwise distinct \cite{MR2893544}.
Moreover, if the action of $G$ is branched, then all parabolic subgroups are weakly maximal \cite{MR1841750,MR1899368}.
In particular, a branch group has uncountably many conjugacy classes of weakly maximal subgroups. If the group is finitely generated, then it has uncountably many automorphism equivalence classes of weakly maximal subgroups.
In the Grigorchuk group $\Gamma$, the class of weakly maximal subgroups is not reduced to the class of parabolic subgroups:
some sporadic examples were constructed \cite{Solved,MR2893544}.
There are however no classification results for weakly maximal subgroups of $\Gamma$ or other (weakly) branch groups (Problem 6.3 in \cite{Solved}).

Our main purpose in this paper is to show that a finitely generated regular branch group (subject to a minor technical condition) contains uncountably many non parabolic weakly maximal subgroups up to automorphism equivalence.
More precisely, we show that any finite subgroup $Q\leq G$ is contained in uncountably many weakly maximal subgroups.
\begin{thm} \label{MainThm}
Let $T$ be a regular rooted tree and $G\leq \Aut(T)$ be a \fgrbg.
Then, for any finite subgroup $Q\leq G$ there exist uncountably many automorphism equivalence classes
 of weakly maximal subgroups of $G$ containing $Q$.
\end{thm}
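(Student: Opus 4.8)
The plan is to generalize the classical fact that parabolic subgroups are weakly maximal. A parabolic subgroup stabilizes a single end of $T$; since a finite group $Q$ need not fix any end, I would replace a single ray by the finite $Q$-orbit of a ray and take its setwise stabilizer. Before constructing anything, I record a reduction that handles the word ``uncountably many classes'' cheaply: because $G$ is finitely generated, $\Aut(G)$ is countable, so each automorphism-equivalence class of subgroups is countable. Hence it suffices to exhibit uncountably many \emph{distinct} weakly maximal subgroups of $G$ each containing $Q$; the uncountably many equivalence classes follow automatically.

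\emph{Construction.} Fix an end $\xi\in\partial T$ and let $\Xi=Q\xi=\{\xi=\xi^{(1)},\dots,\xi^{(k)}\}$ be its $Q$-orbit, finite because $Q$ is finite. Set $W_\xi:=\Stab_G(\Xi)$, the setwise stabilizer of the finite set $\Xi\subseteq\partial T$. Since $Q$ permutes its own orbit, $Q\leq W_\xi$, so containment of $Q$ is immediate. For the index, the pointwise stabilizer $P_\Xi:=\bigcap_i\Stab_G(\xi^{(i)})$ lies inside the parabolic subgroup $\Stab_G(\xi)$ and is therefore of infinite index; as $P_\Xi$ is the kernel of the action of $W_\xi$ on $\Xi$, we have $[\,W_\xi:P_\Xi\,]\leq k!$, so $W_\xi$ also has infinite index. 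Thus $W_\xi$ is an infinite-index subgroup containing $Q$, and for appropriate $\xi$ with $k>1$ it is not parabolic, which is the source of the non-parabolic examples for \GGS-type groups.

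\emph{Weak maximality.} I would mirror the proof that parabolic subgroups are weakly maximal. First, $W_\xi$ contains $\Rist_G(w)$ for every vertex $w$ lying off all the rays of $\Xi$: such an element fixes everything outside the subtree $T_w$, hence fixes each end of $\Xi$ and a fortiori stabilizes $\Xi$. Since $G$ is finitely generated, it then suffices to show that for every $g\in G\setminus W_\xi$ the subgroup $\langle W_\xi,g\rangle$ has finite index. If $g\notin W_\xi$ then $g\Xi\neq\Xi$, so some ray of $\Xi$ is carried off $\Xi$, and there is a level $n$ at which the relevant vertices are moved; conjugating by $g$ the off-orbit rigid stabilizers already present in $W_\xi$ should then supply the rigid stabilizers of the finitely many on-orbit vertices at level $n$ that were missing, so that $\langle W_\xi,g\rangle$ contains the full rigid level stabilizer $\Rist_G(n)=\prod_{|v|=n}\Rist_G(v)$, of finite index by the branch property. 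I expect this filling-in step to be the main obstacle: one must verify that conjugation by a single $g$, together with the off-orbit rigid stabilizers, genuinely generates a rigid level stabilizer, and it is presumably here that the minor technical hypothesis on $G$ (a recurrence/self-similarity of the branch structure ensuring that conjugates of rigid stabilizers generate rigid level stabilizers) enters; the bookkeeping over the $k$ rays and the top-level $Q$-action must also be carried out with care.

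\emph{Uncountability.} Finally I would show that $Q\xi\mapsto W_\xi$ is injective by recovering $\Xi$ intrinsically from $W_\xi$: an end $\eta$ lies off $\Xi$ if and only if some subtree neighbourhood of $\eta$ has its rigid stabilizer $\Rist_G(\eta_n)$ contained in $W_\xi$, whereas for $\eta\in\Xi$ no such $n$ exists because the rigid stabilizers along $\eta$ move its tail off $\Xi$. Thus $W_\xi$ determines $\Xi$, hence the orbit $Q\xi$. Since $\partial T$ has the cardinality of the continuum and $Q$ is finite, there are uncountably many distinct $Q$-orbits of ends, hence uncountably many distinct weakly maximal subgroups $W_\xi$, each containing $Q$. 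Combined with the reduction of the first paragraph, this yields uncountably many automorphism-equivalence classes of weakly maximal subgroups of $G$ containing $Q$, as required.
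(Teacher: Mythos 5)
Your proposal is correct, and it takes a genuinely different route from the paper. The paper argues by contradiction, Cantor-style: assuming the weakly maximal subgroups containing $Q$ could be enumerated $W_1, W_2, \dots$, it builds an increasing chain $H_i = \langle Q, w_1,\dots,w_i\rangle$, each $w_i$ chosen inside a rigid vertex stabilizer so that $w_i \notin W_i$, while forcing $H_i \cap \Stab_G(k_1)$ to fix a whole subtree $T_{u_i}$ so that $H_i$ keeps infinite index (this is where the regular branch hypothesis enters, via the finite-index image of $\psi_{k_i}$); then $H=\bigcup_i H_i$ has infinite index because $G$ is finitely generated, so $H$ lies in some weakly maximal subgroup containing $Q$, which must be some $W_i$ --- a contradiction. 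That argument is non-constructive. You instead exhibit the subgroups explicitly, as setwise stabilizers $W_\xi=\Stab_G(Q\xi)$ of finite orbits of ends, generalizing parabolic subgroups, and separate them by an intrinsic rigid-stabilizer criterion. Your route buys concreteness (an explicit continuum family of ``quasi-parabolic'' weakly maximal subgroups), and it actually needs only the branch property rather than regular branchness, since the whole argument runs on rigid stabilizers; both proofs share the same opening reduction via countability of $\Aut(G)$.

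Two remarks on the step you flagged as the main obstacle: it does go through, and no self-similarity or recurrence hypothesis is needed, because $g\Rist_G(w)g^{-1}=\Rist_G(gw)$ holds for arbitrary tree automorphisms. The correct bookkeeping is: given $g\notin W_\xi$, pick $\xi_i\in\Xi$ with $\eta:=g\xi_i\notin\Xi$ (possible since $g\Xi\neq\Xi$ and both sets are finite of equal size), and take $n$ so large that the level-$n$ vertex $gx_n$ of $\eta$ lies off every ray of $\Xi$, where $x_n$ is the level-$n$ vertex of $\xi_i$; then $\Rist_G(x_n)=g^{-1}\Rist_G(gx_n)g\leq\langle W_\xi,g\rangle$. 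Note that conjugation by $g$ supplies only this one on-orbit vertex --- rays that $g$ fixes or permutes within $\Xi$ gain nothing --- but since $\Xi$ is a single $Q$-orbit and $Q\leq W_\xi$, the remaining on-orbit vertices at level $n$ are exactly $Qx_n$, so their rigid stabilizers are $Q$-conjugates of $\Rist_G(x_n)$; together with the off-orbit rigid stabilizers already in $W_\xi$ this gives $\Rist_G(n)\leq\langle W_\xi,g\rangle$, of finite index since $G$ is branch. Finally, your injectivity criterion uses that for $\eta\in\Xi$ the groups $\Rist_G(\eta_n)$ contain elements moving $\eta$ off $\Xi$; this is true but deserves a line: $\Rist_G(m)$ has finite index while $\Stab_G(\eta)$ does not, so some element of $\Rist_G(m)$ moves $\eta$, and its components at vertices other than $\eta_m$ fix $\eta$, so already $\Rist_G(\eta_m)\leq\Rist_G(\eta_n)$ moves $\eta$, necessarily to an end outside $\Xi$ once $m$ exceeds all divergence levels.
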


As an immediate corollary of the theorem, we have the following result. It indicates that a full classification of weakly maximal subgroups must involve, in a significant way, subgroups that are not parabolic.

\begin{cor} \label{MainCor}
Let $T$ be a regular rooted tree and $G\leq \Aut(T)$ be a \fgrbg.
Suppose that $G$ contains a finite subgroup $Q$ that does not fix any point in $\partial T$.
Then there exist uncountably many automorphism equivalence classes of weakly maximal subgroups of $G$, all distinct from  classes of  parabolic subgroups associated with the action of $G$ on $T$.
\end{cor}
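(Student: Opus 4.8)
The plan is to deduce the statement from Theorem~\ref{MainThm} together with a clean characterisation of parabolic subgroups inside the class of weakly maximal subgroups. Applying Theorem~\ref{MainThm} to the given finite subgroup $Q$ produces uncountably many automorphism equivalence classes of weakly maximal subgroups of $G$, each represented by some $W$ with $Q\leq W$. It then remains to argue that none of these classes coincides with the class of a parabolic subgroup.

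First I would record the following observation, which isolates parabolic subgroups relative to the boundary action: a weakly maximal subgroup $W\leq G$ fixes a point of $\partial T$ if and only if it is parabolic. Indeed, if $W$ fixes $\xi\in\partial T$ then $W\leq \Stab_G(\xi)=P_\xi$; since $G$ is a \fgrbg{} its action is branched, so $P_\xi$ is itself weakly maximal, hence of infinite index and proper. As $W$ is maximal among subgroups of infinite index and $P_\xi$ has infinite index with $W\leq P_\xi\lneq G$, we get $W=P_\xi$. The converse is immediate. Now, because $Q\leq W$ we have $\Fix_{\partial T}(W)\subseteq\Fix_{\partial T}(Q)=\emptyset$ by hypothesis, so $W$ fixes no boundary point and is therefore not parabolic. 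Thus every class produced by Theorem~\ref{MainThm} has a non-parabolic representative.

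The main obstacle is to upgrade this to the level of automorphism equivalence classes: a priori an automorphism $\varphi\in\Aut(G)$ could carry our non-parabolic $W$ to a parabolic subgroup $P_\xi$, and then $\varphi(Q)\leq P_\xi$ would merely say that $\varphi(Q)$ (rather than $Q$ itself) fixes $\xi$, which does not directly contradict the hypothesis on $Q$. What is really needed is that $\Aut(G)$ permutes the family of parabolic subgroups, equivalently that the property ``fixes a point of $\partial T$'' is invariant under $\Aut(G)$ within the (abstractly defined) class of weakly maximal subgroups. I would establish this by giving a purely group-theoretic description of the parabolic subgroups --- for instance as exactly those weakly maximal subgroups arising as a descending intersection $\bigcap_n \Stab_G(v_n)$ of the finite-index vertex stabilisers along a ray --- and checking that this description is preserved by automorphisms. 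Once $\Aut(G)$ is known to preserve the parabolic family the argument closes: if some $\varphi$ sent $W$ to a parabolic $P_\xi$, then $W=\varphi^{-1}(P_\xi)$ would itself be parabolic, hence fix a boundary point, contradicting the previous paragraph.

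I expect this invariance step to be the crux. In the concrete cases relevant to the \GGS{}-type application it can alternatively be obtained from the known rigidity $\Aut(G)\leq N_{\Aut(T)}(G)$, under which every automorphism is induced by a tree automorphism and therefore permutes $\partial T$ and the associated parabolic subgroups; this is why the corollary is genuinely immediate for those examples, while the general case rests on the group-theoretic characterisation above.
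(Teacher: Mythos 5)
You have the right skeleton, and your first two steps are sound: Theorem \ref{MainThm} supplies the uncountably many classes, and since each representative $W$ contains $Q$, which fixes no point of $\partial T$, no such $W$ fixes a boundary point, hence none is parabolic. (Your stronger observation --- that a weakly maximal $W$ fixing $\xi\in\partial T$ must equal $\Stab_G(\xi)$, because parabolic subgroups of branch groups are themselves weakly maximal and hence of infinite index --- is correct, but more than is needed: it suffices that every parabolic subgroup fixes a boundary point while $W$ fixes none.) You also correctly identify the crux: one must show that $\Aut(G)$ permutes the family of parabolic subgroups.

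Your resolution of that crux, however, has a genuine gap in the general case. The characterization you propose --- parabolic subgroups are exactly the weakly maximal subgroups of the form $\bigcap_n \Stab_G(v_n)$ along a ray --- is not purely group-theoretic: the vertex stabilizers $\Stab_G(v_n)$ are defined through the tree action, so ``checking that this description is preserved by automorphisms'' is precisely the problem you started with; an abstract automorphism of $G$ has no a priori reason to permute vertex stabilizers, and no action-free characterization of them is actually supplied. As written, the plan is circular and the argument does not close. Moreover, your fallback is unnecessarily weak: the rigidity you invoke is not special to \GGS{}-type examples. The paper's proof cites Theorem 7.3 of \cite{Lavreniuk}, which applies to \emph{every} weakly branch group, hence to every \fgrbg{} in the hypotheses: any automorphism $\phi$ of $G$ is induced by a homeomorphism $\psi$ of $\partial T$, in the sense that $\phi(g)(\xi)=\psi^{-1}\cdot g\cdot\psi(\xi)$ for all $g\in G$ and $\xi\in\partial T$. (This gives a boundary homeomorphism rather than a tree automorphism, but that is enough: it yields $\phi\bigl(\Stab_G(\xi)\bigr)=\Stab_G(\psi^{-1}\xi)$, so parabolics go to parabolics.) Replacing your unproven characterization with this citation repairs the proof and makes it essentially the paper's.
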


These results hold in a large class of groups. Indeed, all known examples of self-similar branch groups are regular branch.
Well-known examples include the first Grigorchuk group $\Gamma$ and Gupta-Sidki groups \cite{MR2035113} and many of the \GGS{} groups (see Definition \ref{DefGGS} and the discussion below).
Moreover, all these examples essentially admit a unique branch action on a spherically regular tree \cite{MR2011117}.
(An infinite rooted tree is \emph{spherically regular} if the degree of a vertex depends only on its distance from the root.)
The unicity of branch action, together with our Theorem \ref{MainThm} allow us to deduce the following.

\begin{thm}\label{ThmUnique}
Let $G$ be either the first Grigorchuk group, or a \GGS{} group with $E=(1,-1)$ or $E=(\epsilon_1,\dots,\epsilon_{p-1})$, $p$ prime, such that the $\epsilon_i$ are not all zero and not all non-zero.
Then $G$ has uncountably many automorphism equivalence classes of weakly maximal subgroups, all distinct from classes of parabolic subgroups of any branch action of $G$ on a spherically regular tree.
\end{thm}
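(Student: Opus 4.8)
The plan is to deduce Theorem \ref{ThmUnique} from Corollary \ref{MainCor} together with the rigidity (unicity of branch action) result of \cite{MR2011117}. The two ingredients are essentially independent and meet only at the end: Corollary \ref{MainCor} produces, for the standard action, an uncountable family of weakly maximal subgroups avoiding the parabolic subgroups, while \cite{MR2011117} guarantees that every branch action on a spherically regular tree is a copy of the standard one, so that controlling one action controls them all.

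First I would check the hypotheses of Corollary \ref{MainCor}. Under the stated restrictions on $E$, and for $\Gamma$, the group $G$ is a \fgrbg{} acting on a regular rooted tree $T$ (binary for $\Gamma$, $p$-regular for the \GGS{} groups); this is recorded in \cite{MR2035113} and in the discussion following Definition \ref{DefGGS}, the conditions on the $\epsilon_i$ being precisely those ensuring both the regular branch property and the rigidity used below. It then suffices to exhibit a finite subgroup with no fixed boundary point. I would take $Q=\langle a\rangle$, where $a$ is the rooted automorphism permuting the first-level subtrees (a $p$-cycle, resp. the transposition for $\Gamma$). Since $a$ alters the first letter of every ray, $\Fix(a)\cap\partial T=\emptyset$, whence $\Fix(Q)\cap\partial T=\emptyset$; and $Q$ is finite of order $p$ (resp. $2$). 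Corollary \ref{MainCor} then yields uncountably many automorphism equivalence classes of weakly maximal subgroups $W_\alpha$, none equivalent to a parabolic subgroup of the standard action.

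The remaining step is to replace ``the standard action'' by ``any branch action on a spherically regular tree,'' and this is where the rigidity result enters and where I expect the only genuine difficulty, which is mostly bookkeeping. Let $\sigma$ denote the standard action, with family of parabolic subgroups $\mathcal P_0$, and let $\rho\colon G\to\Aut(T')$ be any branch action on a spherically regular tree. By \cite{MR2011117} there is a tree isomorphism $\iota\colon T\to T'$ and an automorphism $\phi\in\Aut(G)$ with $\rho=\iota_*\circ\sigma\circ\phi$, where $\iota_*$ is the induced isomorphism $\Aut(T)\to\Aut(T')$ given by $\iota_*(\alpha)=\iota\circ\alpha\circ\iota^{-1}$. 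A direct computation of stabilizers gives $\Stab_\rho(\xi')=\phi^{-1}\bigl(\Stab_\sigma(\iota^{-1}\xi')\bigr)$ for every $\xi'\in\partial T'$, so the family of parabolic subgroups of $\rho$ is exactly $\phi^{-1}(\mathcal P_0)$. Since automorphism equivalence is an equivalence relation, each such subgroup is equivalent to a member of $\mathcal P_0$; hence if some $W_\alpha$ were equivalent to a parabolic subgroup of $\rho$, it would already be equivalent to a member of $\mathcal P_0$, contradicting the previous paragraph. As $\rho$ was arbitrary, the classes $\{W_\alpha\}$ form the asserted uncountable family of weakly maximal subgroups distinct from all parabolic classes. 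The one point requiring care is the exact form of the equivalence furnished by \cite{MR2011117}: I would confirm that it is conjugacy by a tree isomorphism up to a single element of $\Aut(G)$, since it is precisely this that lets non-equivalence to $\mathcal P_0$ pass to every $\rho$.
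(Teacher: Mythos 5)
Your proposal is correct and follows essentially the same route as the paper: take $Q=\langle a\rangle$, apply Corollary \ref{MainCor} to the standard action, and invoke the rigidity of \cite{MR2011117} to transfer non-parabolicity to every branch action on a spherically regular tree. One caveat on your final bookkeeping step: the rigidity result permits ``deletion of layers,'' so the comparison is via a $G$-equivariant boundary homeomorphism rather than a tree isomorphism $T\to T'$; this is immaterial to your argument, since parabolic subgroups are stabilizers of boundary points and the transferred family is still the image of the standard parabolics under an automorphism of $G$.
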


We also demonstrate that, loosely speaking, weakly maximal subgroups of $\Gamma$ and \GGS{} groups live as deep in the tree as one desires.

\begin{thm} \label{MainTechnicalResult}
Let $T$ be a regular rooted tree and $G\leq \Aut(T)$ be a \fgrbg.
Suppose that $G$ contains an infinite index subgroup $Q$ that does not fix any point in $\partial T$.
Then there exists an integer $l$ such that for any $k \in \N$ there exists a weakly maximal subgroup of $G$ which stabilizes the $k$\textsuperscript{th} level and does not stabilize any vertex of the $(k+l)$\textsuperscript{th} level.

More precisely, we can choose $l$ to be the minimal integer such that there exists an infinite index subgroup $Q$ that does not stabilize any vertex of the $l$\textsuperscript{th} level.

If $G$ is the first Grigorchuk group or a regular branch \GGS{} group, then it is possible to take $l=1$.
\end{thm}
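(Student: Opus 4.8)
The plan is to first turn the boundary hypothesis into a statement about finite levels, then reduce the whole problem to a construction inside the finite-index normal subgroup $N:=\Stab_G(\level_k)$, and finally carry out that construction using the regular branch structure together with the abundance of weakly maximal subgroups supplied by Theorem~\ref{MainThm}. For the first step, note that for any subgroup $Q\le G$ the set of vertices fixed by every element of $Q$ is a rooted subtree $T_Q$ of $T$; if $T_Q$ were infinite then, being a subtree of a locally finite tree, it would contain an infinite ray by König's lemma, and that ray would be a point of $\partial T$ fixed by $Q$. Hence $Q$ fixes no point of $\partial T$ if and only if $T_Q$ is finite, in which case $Q$ stabilizes no vertex beyond the depth of $T_Q$. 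Thus the set of integers $m$ for which \emph{some} infinite index subgroup stabilizes no vertex of $\level_m$ is non-empty, and I take $l$ to be its minimum (necessarily $l\ge 1$, since the root is fixed by all of $G$). Fix once and for all an infinite index subgroup $Q$ that stabilizes no vertex of $\level_l$.

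The heart of the matter is a lemma reconciling the constraint ``stabilizes $\level_k$'' with weak maximality. Because $N$ is normal of finite index in the finitely generated group $G$, it is itself finitely generated, so the Zorn's lemma argument quoted in the introduction applies verbatim inside $N$: every infinite index subgroup of $N$ is contained in one that is weakly maximal in $N$. The lemma I would prove is that a subgroup $H\le N$ which is weakly maximal in $N$ and satisfies $N_G(H)\le N$ is automatically weakly maximal in $G$. Indeed $[G:H]=[G:N][N:H]=\infty$, and for $g\in G\setminus H$: if $g\in N$, weak maximality in $N$ makes $\langle H,g\rangle$ of finite index in $N$; if $g\notin N$, then $g\notin N_G(H)$ yields an $h\in H$ with $ghg^{-1}\notin H$, while normality of $N$ forces $ghg^{-1}\in N$, so $\langle H,g\rangle\cap N$ properly contains $H$ and is therefore finite index in $N$ by weak maximality. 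Either way $\langle H,g\rangle$ has finite index in $G$. Since enlarging a subgroup only helps the property ``stabilizes no vertex of $\level_{k+l}$'', it now suffices to produce an infinite index $H_0\le N$ stabilizing no vertex of $\level_{k+l}$ and to extend it to a weakly maximal subgroup of $N$ while keeping the normalizer inside $N$.

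For the construction I would work through the injection $\psi_k\colon N\hookrightarrow\prod_{u\in\level_k}\Aut(T_u)$ onto its image $L$, which by self-similarity has all coordinate projections $\pi_u\colon L\to G$ onto, so $L\le\prod_{u\in\level_k}G$ is a subdirect product, and by regular branchness $L\supseteq\prod_{u\in\level_k}K$. A subgroup $H\le N$ stabilizes no vertex of $\level_{k+l}$ precisely when, for each $u$, the section group $\pi_u(\psi_k(H))$ stabilizes no vertex of $\level_l$ inside $T_u\cong T$. If one could keep these projections \emph{equal to} $G$, then spherical transitivity of $G$ would move every vertex of every finite level and the target would hold even with $l=1$; the obstruction is that full-projection subdirect subgroups need not lie in the proper finite-index subgroup $L$, and cutting them down to $L$ shrinks the projections to finite index subgroups of $G$, which may fix deep vertices. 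This is exactly the slack measured by $l$. Starting from the transplanted data $\prod_u Q$ one checks $[K:K\cap Q]=\infty$ (since $[G:K]<\infty$ gives $[G:K\cap Q]=[G:Q][Q:K\cap Q]=\infty$), so $H_0:=\psi_k^{-1}\!\big(L\cap\prod_u Q\big)$ has infinite index, lies in $N$, and its sections contain $K\cap Q$, hence stabilize no vertex of $\level_l$. To upgrade $H_0$ to a subgroup that is weakly maximal in $N$ \emph{and} rigid enough to satisfy $N_G(H)\le N$, I would choose the data below distinct vertices of $\level_k$ to be pairwise automorphism-inequivalent --- this is where the uncountable supply from Theorem~\ref{MainThm} enters --- so that no nontrivial permutation of $\level_k$ realized in $G$ can match the sections, and hence no element of $G\setminus N$ normalizes the result.

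I expect the main obstacle to be this last point: ensuring that the condition $N_G(H)\le N$ \emph{survives} the passage from $H_0$ to a weakly maximal subgroup of $N$, since enlarging a subgroup does not in general preserve its normalizer. The plan is to carry out the extension coordinate-respectingly inside $L$ and to keep the sections pairwise non-conjugate throughout, exploiting the rigidity of weakly maximal subgroups of the just-infinite group $G$; breaking all realizable level-$k$ symmetries in this way is what forces the extended subgroup to be self-normalized by $N$ and therefore, by the lemma, weakly maximal in $G$. Finally, for the first Grigorchuk group and regular branch \GGS{} groups the value $l=1$ is obtained concretely: the rooted automorphism can be placed simultaneously below every vertex of $\level_k$ by a \emph{single} element of $G$, this membership being one of the defining relations of these groups, and that element already moves every vertex of $\level_{k+1}$; thus the property ``stabilizes no vertex of $\level_{k+1}$'' holds with $l=1$, and only the weak maximality and normalizer conditions remain to be arranged as above.
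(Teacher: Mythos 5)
Your reduction lemma is correct and is a genuinely different idea from the paper: if $N=\Stab_G(k)$ and $H\le N$ is weakly maximal in $N$ with $N_G(H)\le N$, then $H$ is weakly maximal in $G$. The problem is that both halves of the construction meant to feed this lemma have genuine gaps. First, $H_0=\psi_k^{-1}\bigl(L\cap\prod_u Q\bigr)$ does not do what you claim: ``stabilizes no vertex of $\level_l$'' is a largeness property, inherited by overgroups but not by subgroups, and your sections are only known to lie between $K\cap Q$ and $Q$. The hypothesis concerns $Q$, not $K\cap Q$, which may fix vertices of $\level_l$ and may even be trivial. Concretely, in the Grigorchuk group with the natural witness $Q=\langle a\rangle$ for $l=1$: here $K\le\Stab_\Gamma(1)$, so $K\cap Q=\{1\}$, and an abelianization computation shows $\psi_1(\Stab_\Gamma(1))\cap(Q\times Q)=\{(1,1)\}$, so for $k=1$ your $H_0$ is the trivial group. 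The same computation refutes your final paragraph: there is no element of $\Gamma$ whose sections at all vertices of a level simultaneously equal $a$ (already $(a,a)\notin\psi_1(\Stab_\Gamma(1))$), so the claimed ``defining relation'' does not exist; the paper gets $l=1$ simply because the finite subgroup $\langle a\rangle$ has infinite index and fixes no vertex of $\level_1$.

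Second, the step you yourself flag as the main obstacle is indeed unproved, and the proposed fix cannot work as stated: a subgroup of $N$ of product form $\psi_k^{-1}\bigl(L\cap\prod_u H_u\bigr)$ with every $H_u$ of infinite index is \emph{never} weakly maximal in $N$. Indeed, pick $x\in K\setminus H_{u_0}$ (possible since $[K:K\cap H_{u_0}]=\infty$) and adjoin $x':=\psi_k^{-1}(1,\dots,1,x,1,\dots,1)\in N$; the enlarged subgroup properly contains your group but still projects into $H_{u_1}$ at every other coordinate $u_1$, hence still has infinite index in $N$. So a ``coordinate-respecting'' enlargement can never reach weak maximality in $N$, and once you pass to an arbitrary weakly maximal extension via Zorn's lemma you lose all control of the sections and of $N_G(H)$. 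The paper avoids both problems by inverting the logic: it takes $\Delta$ weakly maximal in $G$ containing $Q$, forms the pullback $(\pi_1\circ\psi_k)^{-1}(\Delta)$ through a \emph{single} coordinate, lets $H$ be any weakly maximal subgroup of $G$ (not of $N$) containing it, and then \emph{proves} that $H\le\Stab_G(k)$ automatically: $H$ contains $1\times Q_2\times\cdots\times Q_{d^k}$ (the part of the image of $\psi_k$ killed by $\pi_1$), so an element of $H$ moving a vertex of $\level_k$ would, after conjugation, put a finite-index subgroup into the first coordinate and force $H\cap\Stab_G(k)$ to have finite index, contradicting weak maximality. Because membership in $\Stab_G(k)$ is forced rather than imposed, no relative weak maximality and no normalizer condition are ever needed, and the sections of $H$ are then large (the first contains $Q$, the others map onto $G$), which is what rules out fixed vertices at level $k+l$.
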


This paper is organized as follows.
In \S \ref{sec:preliminaries}, we fix notation and prove some results about weakly maximal subgroups. 
\S \ref{sec:proof} contains the proof of Theorem \ref{MainTechnicalResult} and \S \ref{sec:proof1}  the proofs of Theorem \ref{MainThm}, Corollary \ref{MainCor} and Theorem \ref{ThmUnique}.

We are grateful to Slava Grigorchuk for numerous valuable remarks on the first version of the paper. We are also grateful to an anonymous referee for corrections that improved the paper.
\section{Preliminaries}\label{sec:preliminaries}
\subsection{Regular branch groups and just infinite groups}
For the $d$-regular rooted tree  $T$  denote by $\level_n$ the set of vertices of $T$ \lq\lq at level $n$\rq\rq , i.e., at distance $n$ from the root.
We picture the  tree $T$ with the root at the top, with $d$ edges connecting every vertex of $\level_n$ with vertices of $\level_{n+1}$.
This gives a natural (partial) order on vertices of $T$ defined by $w\leq v$ if $w$ lies on an infinite ray emanating from $v$.
For a vertex $v$ in $T$, we note $T_v$ the subtree of $T$ consisting of all vertices $w\leq v$.

Let $G\leq \Aut(T)$ be a group of automorphisms of $T$. It is said to be \emph{spherically transitive} if it acts transitively on each level of $T$.
We denote by $\Stab_G(v)$ the stabilizer of a vertex $v$ and by $\Stab_G(n)=\cap_{v\in \level_n} \Stab_G(v)$ the stabilizer of the level $n$.
We therefore have injective maps 
\begin{align*}
	\psi_v\colon\Stab_G(v)&\to\Aut(T)			&\psi_1\colon \Stab_G(1)&\to \Aut(T)^d\\
					\phi&\to\phi|_{T_v}		&\phi&\mapsto (\phi|_{T_{v_1}},\dots,\phi|_{T_{v_d}})
\end{align*}
where $v$ is any vertex of $T$, $v_1,\dots ,v_d$ are the vertices of the first level and $T_{w}$ denotes the subtree of $T$ growing from the vertex $w$.
For a $d$-regular rooted tree $T$, a group $G\leq \Aut(T)$ is said to be \emph{self-similar} if for any $v$, $\psi_v(\Stab_G(v))$ is equal to $G$, after after an identification of $T$ with $T_v$.

The \emph{rigid stabilizer} $\Rist_G(v)$ of a vertex $v$ in $G$ is the set of elements of $G$ acting trivially outside $T_v$.
The rigid stabilizer of a level $\Rist_G(n)$ is the subgroup generated by the $\Rist_G(v)$ for all vertices $v$ of level $n$.

A spherically transitive group $G\leq \Aut(T)$ is \emph{weakly branch} if all rigid stabilizers of levels are infinite and \emph{branch} if all rigid stabilizers of levels are of finite index.

\begin{defn}
For a $d$-regular rooted tree $T$, a group $G\leq \Aut(T)$ is said to be \emph{regular branch (over $K$)} if it satisfies the following conditions
\begin{itemize}
\item G is spherically transitive;
\item $G$ is self-similar;
\item there exists a finite index subgroup $K$ of $G$ such that $K^d$ is contained in $\psi_1(K\cap \Stab_G(1))$ as a subgroup of finite index.
\end{itemize}
\end{defn}

Since $G$ is self-similar, the image of $\psi_1$ is included in $G^d$ and we can therefore iterate $\psi_1$ to have a sequence
\[
	\psi_n\colon\Stab_G(n)\hookrightarrow G^{d^n}
\]
It follows from the definition that $K^{d^n}$ is contained in $\psi_n(\Stab_G(n))$ as a subgroup of finite index and therefore that for all level $n$, $\Rist_G(n)$ is of finite index in $G$ \cite{MR2893544}.
This proves that a regular branch group is a branch group, and it follows directly from definitions that a branch group is weakly branch.
Since $K^{d^n}\leq\psi_n(\Stab_G(n))\leq G^{d^n}$, the image of $\psi_n$ is of finite index in $G^{d^n}$.

A group $G$ is \emph{just-infinite} if all its proper normal subgroups are of finite index (equivalently if all its non-trivial quotient are finite).

\begin{lem}\label{lem:elconj}
Let $G$ be a just-infinite group, $H$ be an infinite index subgroup of $G$ and $\gamma\in G$ be any nontrivial element.
Then there is a conjugate of $\gamma$ that is not in $H$.
\end{lem}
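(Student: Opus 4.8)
The plan is to argue by contradiction, passing to the normal closure of $\gamma$. Suppose, contrary to the conclusion, that \emph{every} conjugate $g\gamma g^{-1}$ (for $g \in G$) lies in $H$. Then the subgroup
\[
	N = \langle\, g\gamma g^{-1} : g \in G \,\rangle,
\]
namely the normal closure of $\gamma$ in $G$, is contained in $H$. So the whole strategy reduces to showing that $N$ is too large to fit inside an infinite index subgroup.

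First I would note that $N$ is normal in $G$ by construction, being generated by a conjugation-invariant set. Moreover $N$ is nontrivial, since it contains $\gamma \neq 1$. Invoking just-infiniteness of $G$, every nontrivial normal subgroup has finite index, so $[G : N] < \infty$.

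Next, since $N \leq H \leq G$, the index is multiplicative and gives $[G : H] \leq [G : N] < \infty$, contradicting the hypothesis that $H$ has infinite index in $G$. Therefore the assumption was false, and some conjugate of $\gamma$ lies outside $H$.

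I expect no serious obstacle here: the argument is immediate once one translates ``all conjugates of $\gamma$ lie in $H$'' into ``the normal closure of $\gamma$ lies in $H$.'' The only point requiring a word of justification is that $N$ is nontrivial, which is clear as $\gamma \neq 1$; everything else follows from the definition of just-infinite together with the elementary fact that a finite index subgroup of a finite index subgroup has finite index.
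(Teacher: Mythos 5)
Your proof is correct and follows exactly the same route as the paper's: contrapositive via the normal closure of $\gamma$, just-infiniteness forcing that normal closure to have finite index, and the contradiction with $H$ having infinite index. No issues.
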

\begin{proof}
Suppose that $H$ contains all the conjugates of $\gamma$.
Then $H$ contains $\langle \gamma\rangle^G$, the normal closure of $\gamma$ in $\Gamma$.
This subgroup is normal and therefore of finite index since $G$ is just-infinite.
But this contradicts the fact that $H$ is of infinite index.
\end{proof}

\begin{prop} \label{prop:conj}
Let $G$ be any group and $H$ a weakly maximal subgroup.
If $\gamma \in G \setminus H$ has finite order, then $\gamma H \gamma^{-1} \neq H$.

In particular, in a torsion group $G$, any weakly maximal subgroup is self-normalizing.
\end{prop}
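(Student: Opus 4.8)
The plan is to argue by contradiction, exploiting the tension between weak maximality (an upward condition on $H$) and the finite order of $\gamma$ (which bounds how far $\langle H,\gamma\rangle$ can sit above $H$). Suppose $\gamma H\gamma^{-1}=H$, so that $\gamma$ normalizes $H$. Since $\gamma\notin H$, the subgroup $\langle H,\gamma\rangle$ strictly contains $H$, and because all its generators normalize $H$, the subgroup $H$ is normal in $\langle H,\gamma\rangle$. The quotient $\langle H,\gamma\rangle/H$ is then generated by the single coset $\gamma H$, hence cyclic, and its order divides $\ord(\gamma)<\infty$. Consequently $[\langle H,\gamma\rangle:H]$ is finite.

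Next I would invoke weak maximality. Because $H$ is maximal among subgroups of infinite index and $\langle H,\gamma\rangle$ properly contains $H$, the subgroup $\langle H,\gamma\rangle$ must have finite index in $G$. Writing $[G:H]=[G:\langle H,\gamma\rangle]\cdot[\langle H,\gamma\rangle:H]$ and observing that both factors are finite, I conclude that $H$ itself has finite index in $G$. This contradicts the assumption that $H$ is weakly maximal, and hence of infinite index, establishing $\gamma H\gamma^{-1}\neq H$.

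For the ``in particular'' clause, I would note that the normalizer $N_G(H)$ always contains $H$, so it suffices to prove the reverse inclusion. If $G$ is a torsion group, every $\gamma\in G$ has finite order; the first part then shows that no element of $G\setminus H$ normalizes $H$. Hence $N_G(H)\subseteq H$, so $N_G(H)=H$, i.e.\ $H$ is self-normalizing.

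I expect no serious obstacle here; the one point deserving care is that the index-multiplicativity step $[G:H]=[G:\langle H,\gamma\rangle]\cdot[\langle H,\gamma\rangle:H]$ yields a finite value only because \emph{both} factors are finite, and the finiteness of the second factor is exactly what the finite order of $\gamma$ buys us. For an element of infinite order the cyclic quotient $\langle \gamma H\rangle$ could be infinite, so $[\langle H,\gamma\rangle:H]$ need not be finite and the argument would break down. This is why the hypothesis that $\gamma$ has finite order is essential, and it is also what makes the torsion case so clean.
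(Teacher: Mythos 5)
Your proof is correct and follows essentially the same route as the paper: both show that $\gamma$ normalizing $H$ together with its finite order forces $[\langle H,\gamma\rangle:H]<\infty$ (the paper phrases this via a quotient of $H\rtimes\mathbf{Z}/k\mathbf{Z}$, you via the cyclic quotient $\langle H,\gamma\rangle/H$), and then weak maximality makes $\langle H,\gamma\rangle$, hence $H$, of finite index in $G$ --- a contradiction. Your explicit treatment of the index-multiplicativity step and of the torsion case is a slightly more careful write-up of the same argument.
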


\begin{proof}
The proof is by contradiction.
Suppose that $\gamma H \gamma^{-1} = H$ and $\gamma \notin H$ has order $k$.
Then $\left< \gamma, H \right>$ is a homomorphic  image of $H \rtimes \langle\gamma\rangle$.
Therefore, the group $\left< \gamma, H \right>$ is a homomorphic image of $H \rtimes \Z/k\Z$ and thus $H$ has finite index in $\left< \gamma, H \right>$.
Since $H$ is a weakly maximal subgroup of $G$, the group $\left< \gamma, H \right>$ is of finite index in $G$, and thus 
$H$ is of finite index in $G$, which contradicts $H$ being weakly maximal.
\end{proof}

\begin{cor} \label{cor:infinitelymanyconj}
Let $G$ be a just-infinite group and let 
$s=\sup\{p^n\,|\, p\textnormal{ prime, }n\in\mathbf N,\exists\gamma\in G \textnormal{ of order }p^n\}$.
Then every conjugacy class of subgroups containing a weakly maximal subgroup of $G$ contains at least $s$ subgroups.
\end{cor}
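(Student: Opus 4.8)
The plan is to fix a conjugacy class of subgroups that contains a weakly maximal subgroup $H$ and to bound its cardinality from below by $p^n$ for every prime power $p^n$ that occurs as the order of an element of $G$. Since $s$ is the supremum of such prime powers, this yields the result (with the understanding that a class with at least $p^n$ members for arbitrarily large $p^n$ is infinite, which handles the case $s=\infty$). So I would fix $g\in G$ of order $p^n$, set $C=\langle g\rangle\cong\Z/p^n\Z$, and let $C$ act by conjugation on the conjugacy class of $H$; my goal is to produce a single \emph{free} orbit, which then has exactly $p^n$ distinct members, all lying in the given conjugacy class.

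The key preliminary step, and the place where I expect the real work to be, is to replace $H$ by a conjugate $H'$ (still in the same conjugacy class, and still weakly maximal) for which $C\cap H'=1$. Here I would exploit that $C$, being cyclic of prime-power order, has a \emph{unique} minimal nontrivial subgroup, namely the order-$p$ subgroup $\langle g^{p^{n-1}}\rangle$, and that every nontrivial subgroup of $C$ contains it. Applying Lemma~\ref{lem:elconj} to the nontrivial element $g^{p^{n-1}}$ yields a conjugate $h^{-1}g^{p^{n-1}}h\notin H$, equivalently $g^{p^{n-1}}\notin hHh^{-1}=:H'$. Then $C\cap H'$ is a subgroup of $C$ that omits the unique minimal subgroup $\langle g^{p^{n-1}}\rangle$, and therefore must be trivial. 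It is precisely this unique-minimal-subgroup feature that forces the exponents to be prime powers rather than arbitrary integers, so I would flag that dependence explicitly.

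Finally, with $C\cap H'=1$ established, I would verify that the orbit $\{\,g^iH'g^{-i}\,\}$ has full size $p^n$. If $g^iH'g^{-i}=g^jH'g^{-j}$ with $0\le i<j<p^n$, then the finite-order element $g^{j-i}\neq 1$ normalizes $H'$, so Proposition~\ref{prop:conj} forces $g^{j-i}\in H'$, whence $g^{j-i}\in C\cap H'=1$, a contradiction. Hence the $p^n$ subgroups $g^iH'g^{-i}$ are pairwise distinct, and the conjugacy class of $H$ contains at least $p^n$ subgroups. Letting $p^n$ range over all realized prime-power orders and taking the supremum gives the stated bound of $s$.
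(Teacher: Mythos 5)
Your proof is correct and takes essentially the same route as the paper's: both apply Lemma~\ref{lem:elconj} to $\gamma^{p^{n-1}}$ so that the resulting conjugate (of $\gamma$ in the paper, of $H$ in your version) meets the cyclic group $\langle\gamma\rangle$ only trivially, exploiting the unique minimal subgroup of a cyclic $p$-group, and then both invoke Proposition~\ref{prop:conj} to see that the $p^n$ resulting conjugates of the weakly maximal subgroup are pairwise distinct. The only difference is cosmetic: you conjugate $H$ and phrase the count as a free orbit of $\langle g\rangle$ acting by conjugation, whereas the paper conjugates $\gamma$ and compares the subgroups $f\gamma^if^{-1}Hf\gamma^{-i}f^{-1}$ directly.
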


\begin{proof}
If $s=0$, the conclusion is trivially true.
%
If $s>0$, take $\gamma$ a non-trivial element of order $p^n$.
If $n=1$, by Lemma \ref{lem:elconj}, given a weakly maximal subgroup $H$, there exists $f\gamma f^{-1}$ a conjugate of $\gamma$ which is outside $H$.
Since $p$ is prime, for all $0<i<p$, the element $f\gamma^i f^{-1}$ is not in $H$.
If $n\geq 2$, we apply Lemma \ref{lem:elconj} to $\gamma^{p^{n-1}}$ to find $f\in G$ such that $h=f\gamma^{p^{n-1}}f^{-1}$ is outside $H$.
Since for all $1<i<p^n$, $h$ belongs to the subgroup generated by $f\gamma^{{i}}f^{-1}$, none of the $f\gamma^{{i}}f^{-1}$ are in $H$.

By Proposition \ref{prop:conj}, for each $f\gamma^if^{-1} \neq f\gamma^jf^{-1}$, we have 
\[
	f\gamma^if^{-1} H f\gamma^{-i}f^{-1} \neq f\gamma^{j}f^{-1} H f\gamma^{-j}f^{-1}.
\]
Thus the conjugacy class of $H$ contains at least $\ord(\gamma)$ elements.
\end{proof}

\subsection{Grigorchuk group and \GGS{} groups} \label{sec:grig}

An important example of a \fgrbg{} is the first Grigorchuk group $\Gamma$. It is a $2$-group and it has word growth strictly between polynomial and exponential \cite{GriFirst}. We refer the reader to \cite[Chapter VIII]{MR1786869} for a detailed introduction into the group $\Gamma$ and for proofs of the properties of $\Gamma$ mentioned in this Section.

The group $\Gamma$ can be defined as a subgroup of the group of automorphisms of the infinite binary tree, $\Gamma:=\left< a, b, c, d \right>$, generated by four automorphisms $a,b,c,d$ defined recursively, as follows
\begin{align*}
a(xw) &= (\overline{x}w)&&  \\
b(0w) &=  0a(w) & b(1w) &=  1c(w) \\
c(0w) &=  0a(w) & c(1w) &=  1d(w) \\
d(0w) &=  0w &(1w) &=  1b(w) \\
\end{align*}
where $x\in\{0,1\}$; $w$ denotes an arbitrary binary word; $\overline{1} = 0$ and $\overline{0} = 1$.

In the case when $g \in \Aut(T)$ fixes the first $k$ levels of the tree, we will say that $g$ has \emph{level $k$} and we will write
$g = (\gamma_1, \ldots, \gamma_{2^k})_k$ in order to record the action beyond level $k$ only.
For example, $b = (a, c)_1$, $c = (a, d)_1$, and $d = (1, b)_1$ all have level 1.

The action of $\Gamma$ on the infinite binary tree is branched, and it can be shown moreover that $\Gamma$ is regular branch over the normal closure of the element $(ab)^2$, see \cite{MR2035113}.
This can be used to prove that $\Gamma$ is just-infinite.

While the next lemma is not used in the proofs, we record it for possible future applications:

\begin{lem} \label{lem:topological}
Let $H$ be a weakly maximal subgroup of $\Gamma$.
Let $\overline{H}$ denote the closure of $H$ in the profinite completion $\hat \Gamma$ of $\Gamma$.
Then $\overline{H} \cap \Gamma = H$.
\end{lem}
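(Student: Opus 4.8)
The plan is to read the statement as a separability property: the set $\overline H\cap\Gamma$ is exactly the closure $M$ of $H$ in the profinite topology of $\Gamma$, that is $M=\bigcap_{N\trianglelefteq_f\Gamma}HN$, so the assertion $\overline H\cap\Gamma=H$ says precisely that $H$ is closed in the profinite topology. The inclusion $H\subseteq M$ is immediate, so all the work is in proving $M\subseteq H$. First I would invoke the congruence subgroup property of $\Gamma$ (see \cite{MR1786869}), which makes the level stabilizers $\Stab_\Gamma(n)$ cofinal among finite-index subgroups; this identifies $M$ with the set of $g\in\Gamma$ that agree on every finite level with some element of $H$, i.e. $M=\bigcap_n H\Stab_\Gamma(n)$, and shows that $H$ is dense in $M$ for the profinite topology.

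Next, weak maximality of $H$ gives a dichotomy: $M$ is a subgroup of $\Gamma$ containing $H$, so either $M=H$, which is what we want, or $M$ has finite index in $\Gamma$. Suppose for contradiction that $[\Gamma:M]<\infty$. By the congruence subgroup property again, $M\supseteq\Stab_\Gamma(n)$ for some $n$. Since $\Stab_\Gamma(n)$ is a finite-index normal subgroup of $\Gamma$ contained in $M$ and $H$ is dense in $M$, density forces $M=H\Stab_\Gamma(n)$. Comparing indices, $[\,\Stab_\Gamma(n):H\cap\Stab_\Gamma(n)\,]=[M:H]=\infty$, because $H$ has infinite index in $\Gamma$ while $M$ has finite index; moreover, chasing the equalities $H\Stab_\Gamma(m)=H\Stab_\Gamma(n)$ valid for all $m\ge n$ shows that $H\cap\Stab_\Gamma(n)$ is \emph{itself} dense in $\Stab_\Gamma(n)$. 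Thus the finite-index case forces $\Stab_\Gamma(n)$ to contain a dense subgroup of infinite index.

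The last step, which I expect to be the main obstacle, is to rule this out. Here I would transport the situation one block down the tree via $\psi_n\colon\Stab_\Gamma(n)\hookrightarrow\Gamma^{d^n}$: its image $L$ is a finite-index subgroup of $\Gamma^{d^n}$ containing $K^{d^n}$, and $\psi_n(H\cap\Stab_\Gamma(n))$ is a dense subgroup of $L$ of infinite index. Projecting to the coordinates, each of which maps $L$ onto a finite-index subgroup of $\Gamma$, and using the regular-branch structure together with the torsion hypothesis, I would try to contradict the existence of such a subgroup — for instance by an infinite descent reproducing a dense infinite-index subgroup one coordinate at a time, or by bringing in the fact that every nontrivial normal subgroup of the just-infinite branch group $\Gamma$ contains a derived rigid level stabilizer $\Rist_\Gamma(m)'$. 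The delicate point is exactly this exclusion of a dense infinite-index subgroup of a finite-index subgroup: the soft arguments are not enough, since conjugation is continuous and hence $H$ and all its $M$-conjugates share the same closure $M$, while Proposition \ref{prop:conj} only produces infinitely many distinct conjugates, all dense in $M$ — a situation entirely consistent with $[\Gamma:M]<\infty$. It is therefore the regular-branch geometry of $\Gamma$, through the rigid-stabilizer decomposition of $\Stab_\Gamma(n)$, that must be invoked to close the argument.
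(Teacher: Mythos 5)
Your reduction is correct as far as it goes: identifying $\overline H\cap\Gamma$ with the closure of $H$ in the profinite topology of $\Gamma$, the weak-maximality dichotomy, the equality $M=H\Stab_\Gamma(n)$, and the conclusion that $H\cap\Stab_\Gamma(n)$ is a dense, infinite-index subgroup of $\Stab_\Gamma(n)$ are all fine (the congruence subgroup property is not even needed: one can run the same argument with the normal core of $M$ in place of a level stabilizer). But the step you defer is not a technical remainder --- it is the entire content of the lemma, and you leave it unproved. What you need, namely that no finite-index subgroup of $\Gamma$ contains an infinite-index subgroup dense in its profinite topology, is (for finitely generated groups) \emph{equivalent} to the statement that every maximal subgroup of every finite-index subgroup of $\Gamma$ has finite index: an infinite-index maximal subgroup is automatically dense, and conversely a dense infinite-index subgroup lies, by finite generation, in a maximal subgroup which is then dense, hence not closed, hence of infinite index. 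That is precisely Pervova's theorem \cite{Pervova} in the strengthened form for groups abstractly commensurable with $\Gamma$, due to Grigorchuk and Wilson \cite{MR2009443}; it is a deep result, not something recoverable by the devices you sketch. Indeed, the coordinate-wise ``infinite descent'' through $\psi_n$ fails at the first step: an infinite-index subgroup of a finite-index subgroup of $\Gamma^{d^n}$ can project \emph{onto} a finite-index subgroup of $\Gamma$ in every coordinate, so the inductive hypothesis does not reproduce itself --- this very phenomenon appears in the paper's proof of Theorem \ref{MainTechnicalResult}, where $\pi_i\circ\psi_k$ maps the infinite-index subgroup $H$ onto all of $G$. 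Likewise, the structure theorem for \emph{normal} subgroups of branch groups (containment of derived rigid level stabilizers) says nothing about the non-normal subgroup $H\cap\Stab_\Gamma(n)$.

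If you plug the gap by citing \cite{MR2009443} --- which this paper itself invokes in the introduction for exactly this extension of Pervova's theorem --- your argument becomes a complete proof, and a genuinely different one from the paper's, which never passes to dense subgroups of level stabilizers. The paper instead takes $h\in(\overline H\cap\Gamma)\setminus H$, sets $M=\langle H,h\rangle$, of finite index by weak maximality, hence finitely generated (Schreier) with $M/[M,M]$ finite (torsion); it lets $N$ be the finite-index normal core of $[M,M]$ in $\Gamma$, observes that $HN=MN$ because $h$ lies in the profinite closure of $H$, and then combines the Frattini argument in nilpotent groups with the facts that every proper quotient of $\Gamma$ is nilpotent and that $\Gamma$ is subgroup separable \cite{MR2009443} to force $H=M$, a contradiction. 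Both routes ultimately lean on \cite{MR2009443}, but on different theorems from it; as written, your proposal has a genuine gap exactly where that input should be.
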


\begin{proof}
Suppose not, then there exists $h \in (\overline{H} \cap \Gamma) \setminus H$.
Since $H$ is weakly maximal, the group $M = \left< H, h \right>$ is of finite index in $\Gamma$.
Thus, by Schreier's rank theorem, $M$ is finitely generated.
The subgroup $[M,M]$ of $M$ is characteristic and since $M$ is torsion, $[M,M]$ has finite index in $M$ and hence in $\Gamma$.
Let $N$ be the normal core of $[M,M]$. Since $\Gamma$ is just-infinite, we have that $N$ is of finite index in $\Gamma$.
We have that $H N = M N$, since $h N \leq H N$, thus we have that $H$ contains finitely many elements $a_1, \ldots, a_n$ that generate $M/N$ and hence generate $M/[M,M]$.
Further, the set $\{ a_i \}$ generates any nilpotent quotient of $M$, as in a nilpotent group the Frattini subgroup always contains the derived subgroup \cite[Lemma 5.9, page 350]{MR2109550}.
Since every proper quotient of the Grigorchuk group is nilpotent and $\Gamma$ is subgroup separable \cite{MR2009443}, we must have $H = M$.
Then $h \in H$ --- a contradiction.
\end{proof}

Other interesting examples of groups acting on a regular tree, like the second Grigorchuk group \cite{GriFirst} or Gupta-Sidki groups \cite{GuptaSidki} arise as \GGS{} groups (Grigorchuk-Gupta-Sidki groups, the terminology comes from \cite{Bau93}).
For a general treatment and more examples, see \cite{BarThese}, \cite{MR2035113} and \cite{Vovkivsky} for the case of a $p^n$-regular tree, $p$ prime.
\begin{defn}\label{DefGGS}
Let $T$ be a $d$ regular rooted tree and $a=((1,2,\dots,d))_0$ be the automorphism permuting cyclically the edges coming from the root. For any vector $E=(\epsilon_1,\dots,\epsilon_{d-1})$, with $0\leq \epsilon_i<d$ integers, define the associated \emph{\GGS{} group} $G_E=\langle a,b\rangle$, where $b=(a^{\epsilon_1},\dots,a^{\epsilon_{d-1}},b)_1$.
The vector $E$ is called the \emph{defining vector} of $G_E$.
\end{defn}
The Gupta-Sidki groups (with $E=(1,-1,0\dots,0)$) and the second Grigorchuk group (with $p=4$ and $E=(1,0,1)$) were the first examples of \GGS{} groups.
%
Suppose now that $p$ is prime and that the numbers $\epsilon_i$ are not all zero and not all non-zero, or that $E=(1,-1)$.
In this case, $G_E$ is regular branch \cite{GGS-groups}, just infinite \cite{Vovkivsky} and essentially admits a unique branch action on a spherically regular rooted tree \cite{MR2011117}.

\section{The proof of Theorem \ref{MainTechnicalResult}} \label{sec:proof}

We first demonstrate that if $Q\leq \Aut(T)$ stabilizes no point in $\partial T$, then there exists an integer $l$ such that $Q$ stabilizes no vertex of level $l$.
The proof is done by contradiction.
Let $T_Q$ be the full subgraph of $T$ consisting of all vertices stabilized by $Q$, with edges coming from edges in $T$.
Assume that for each integer $l$, the group $Q$ stabilizes a vertex $v$ of level $l$.
Then $Q$ stabilizes the path between the root and $v$.
Therefore, $T_Q$ is a locally finite tree with paths of arbitrary big length.
By K\"onig's lemma, $T_Q$ contains an infinite path $\xi$ which is stabilized by $Q$ --- contradiction.

\begin{proof}[Proof of Theorem \ref{MainTechnicalResult}]
Let $d$ be the degree of $T$.
Using $Q$ we will, for any integer $k \geq 1$, produce the desired subgroup $W$ of $\Stab_G(k)$.
For any $k$, let $\psi_k$ be the map 
\[
	\Stab_G(k)  \hookrightarrow G^{d^k}
\]
 and for any $i = 1, \ldots, d^k$, let $\pi_i$ be the projection of $G^{d^k}$ onto the $i$th factor.
Let $\Delta$ be a weakly maximal subgroup of $G$ containing $Q$ and look at $(\pi_1\circ \psi_k)^{-1}( \Delta)$.
Since $G$ is regular branch, the image of $\psi_k$ has finite index in $G^{d^k}$.
This and the surjectivity of $\pi_1$ implies that $(\pi_1\circ \psi_k)^{-1}( \Delta)$ is an infinite index subgroup of $\Stab_G(k)$.
Let $H$ be any weakly maximal subgroup containing it.
We claim that $H \leq \Stab_G(0^k)$, where $0^k$ is the leftmost vertex of level $k$.
Suppose, for the sake of contradiction, that there exists $\gamma=(g_1,\dots,g_{d^k})_k\tau \in H \setminus \Stab_G(v)$.
Then for any $(a_1, \ldots, a_{d^k})_k \in H$, we have $\gamma (a_1, \ldots, a_{d^k})_k \gamma^{-1} \in H$.
Since $\gamma$ is not in $\Stab_G(v)$, we have that
\[
	\gamma (a_1, \ldots, a_{d^k})_k  \gamma^{-1} = (g_ja_jg_j^{-1}, b_2, \ldots, g_1a_1g_1^{-1}, \ldots, b_{d^k})_k
\]
for some $j \in 2, 3, \ldots, d^k$; with $b_l$'s being a permutation of conjugates of the remaining $a_i$'s.
However, since $G$ is regular branch, the image of $\psi_k$ is of finite index in $G^{d^k}$.
Thus, the image of $\psi_k$ contains $Q_1 \times Q_2 \cdots \times Q_{d^k}$, where each $Q_i$ is a finite index subgroup of $G$ for $i = 1, \ldots, d^k$.
It follows, then that $\psi_k\bigl(H \cap \Stab_G(k)\bigr)$ contains $1 \times Q_2 \cdots \times Q_{d^k}$, as $1 \times Q_2 \times \cdots \times Q_{d^k}$ is in the kernel of $\pi_1$.
But then $\gamma (H \cap \Stab_G(k)) \gamma^{-1}=H \cap \Stab_G(k)$ and $\psi_k(\gamma (H \cap \Stab_G(k)) \gamma^{-1})$ contains a conjugate of $Q_j$ in the first factor.
Therefore, $\psi_k(H \cap \Stab_G(k))$ is of finite index in $G^{d^k}$.
It follows, because $\psi_k$ is injective, that $H \cap \Stab_G(k)$ is of finite index in $\Stab_G(k)$, which is impossible as $H$ is weakly maximal and $\Stab_G(k)$ is a subgroup of $G$ of finite index.
So we have found our desired contradiction and proved that $H \leq \Stab_G(0^k)$.
The same argument shows that $H \leq \Stab_G(v)$ for any vertex $v$ of the $k$\textsuperscript{th} level; and hence $H \leq \Stab_G(k)$.

Finally, the map $\pi_i \circ \psi_k$ maps $H$ onto all of $G$ for $i \neq 1$.
And $\pi_1 \circ \psi_k(H)$ contains $Q$.
Thus, it is impossible for $H$ to stabilize any vertex of the $(k+l)$\textsuperscript{th} level, as desired.

When $G$ is the first Grigorchuk group or a \GGS{} group, it is possible to take $Q=\langle a\rangle$.
\end{proof}

We remark here that it is now possible to produce countably many groups each of which is not conjugate to any parabolic subgroup of $\Gamma$.
For this, set $\Fix(H)$ to be the points in $T$ fixed by every element in $H$.
For any $g \in \Gamma$ and $H \leq \Gamma$, recall that
$\Fix(g H g^{-1}) = g \Fix (H)$.
By this relation, the conjugate of any parabolic group is parabolic.
Thus, if $H_k$ is conjugate to a parabolic group, it must be parabolic itself.
However, $H_k$ does not fix any vertex of level $k+1$ and conjugation preserves levels of the tree, so this is impossible.

Next, we show that for $i \neq j$, the groups $H_i$ and $H_j$ are never conjugate.
Suppose, for the sake of contradiction, that $H_i$ and $H_j$ are conjugate and $i < j$.
Then $\Fix(H_i) = g \Fix(H_j)$ for some $g \in \Gamma$.
However, by construction, $H_j$ fixes a vertex, $v$, of level $j$.
It follows then that $H_i$ fixed $gv$, but $H_i$ does not fix any vertex at level $i+1$, and hence cannot fix a vertex at level $j \geq i+1$.
It follows then that $H_i$ and $H_j$ are not conjugate.

\section{The proofs of Theorem \ref{MainThm}, Corollary \ref{MainCor} and Theorem \ref{ThmUnique}} \label{sec:proof1}
Let $T$ be a regular rooted tree, $G\leq \Aut(T)$ be a \fgrbg{} and $Q\leq G$ a finite subgroup of $G$.
We will prove that there are uncountably many weakly maximal subgroups of $G$ containing $Q$.
This is sufficient to prove Theorem \ref{MainThm} since $G$ is finitely generated and therefore $\Aut(G)$ is countable.

Our proof here is similar, in a sense, to Cantor's proof of uncountability of the real numbers.

Suppose, for the sake of contradiction, that there exists countably many non-parabolic weakly maximal subgroups that contain $Q$.
Enumerate them $\{ W_i \}_{i\geq1}$.
Since $Q$ is finite, there exists $k_1$ such that $Q\cap \Stab_G(k_1)=\{1\}$ and $Q$ does not act transitively on $\level_{k_1}$.
If $W_1$ contains $\Rist_G(v)$ for all $v \in \level_{k_1}$, then $W_1$ contains $\Rist_G(k_1)$.
This is impossible as $\Rist_G(k_1)$ is of finite index in $G$ because $G$ is branch.
Hence, there exists some $v_0 \in \level_{k_1}$ such that $\Rist_G(v_0)\setminus W_1$ is non-empty.
Pick an element in this non-empty set and call it $w_1$.

\begin{claim} \label{FirstClaim}
Set $H_1:=\left< Q, w_1 \right>$.
Then there exists a vertex $u$ of level $k_1$ such that the subtrees $\{ T_{q(u)}: q \in Q \}$ are all fixed by $H_1 \cap \Stab_G(k_1)$.
Moreover, $H_1 \cap \Stab_G(k_1)$ is the normal closure of $w_1$ in $H_1$.
\end{claim}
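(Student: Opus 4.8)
The plan is to study the action of $H_1$ on the level $\level_{k_1}$ and to identify $H_1 \cap \Stab_G(k_1)$ with the normal closure $N := \langle w_1 \rangle^{H_1}$ of $w_1$ in $H_1$; once this is done, the vertex $u$ will be produced by a support argument. First I would record two elementary facts about $w_1$. Since $w_1 \in \Rist_G(v_0)$ with $v_0 \in \level_{k_1}$, the element $w_1$ fixes $v_0$ and acts trivially on every other vertex of level $k_1$, so $w_1 \in \Stab_G(k_1)$. Moreover, for every $h \in H_1$ the conjugate $h w_1 h^{-1}$ is supported on $h(T_{v_0}) = T_{h(v_0)}$, and since $w_1$ acts trivially on $\level_{k_1}$ the level-$k_1$ action of $H_1$ coincides with that of $Q$; hence $h(v_0)$ lies in the $Q$-orbit $O := \{\, q(v_0) : q \in Q \,\}$. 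Consequently every element of $N$ acts trivially outside $\bigcup_{o \in O} T_o$.

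Next I would set up the level action. Let $\rho \colon H_1 \to \operatorname{Sym}(\level_{k_1})$ be the permutation action on $\level_{k_1}$, so that $\ker \rho = H_1 \cap \Stab_G(k_1)$. Because $w_1 \in \ker \rho$ and $H_1 = \langle Q, w_1 \rangle$, the image $\rho(H_1)$ equals $\rho(Q)$; and since $Q \cap \Stab_G(k_1) = \{1\}$, the restriction $\rho|_Q$ is injective, so $\rho(Q) \cong Q$. As $N \leq \ker \rho$, the map $\rho$ descends to a surjection $\bar\rho \colon H_1 / N \to Q$.

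The crucial step is to verify that $\bar\rho$ is an isomorphism, which will give the ``moreover'' assertion $N = \ker \rho = H_1 \cap \Stab_G(k_1)$. Consider the homomorphism $\iota \colon Q \to H_1/N$, $q \mapsto qN$. Using the faithful identification $\rho|_Q = \mathrm{id}_Q$, the composite $\bar\rho \circ \iota$ is the identity of $Q$, so $\iota$ is injective; and since $H_1/N$ is generated by $\iota(Q)$, which is already a subgroup, $\iota$ is onto. Thus $\iota$ is an isomorphism and $\bar\rho = \iota^{-1}$, whence $N = \ker \rho$. This identification is the main obstacle: it rests precisely on the two defining properties of $k_1$, namely that $Q$ acts faithfully on $\level_{k_1}$ (providing the splitting of $\rho$) and that $w_1$ is a rigid-stabilizer element (keeping the normal closure $N$ confined to the orbit of $v_0$).

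Finally I would produce $u$. By the first paragraph, $H_1 \cap \Stab_G(k_1) = N$ acts trivially outside $\bigcup_{o \in O} T_o$. Since $Q$ does not act transitively on $\level_{k_1}$, the orbit $O$ is a proper subset, so I may choose $u \in \level_{k_1} \setminus O$. Then $q(u) \notin O$ for every $q \in Q$, since $q(u) \in O$ would force $u \in O$; hence each subtree $T_{q(u)}$ is disjoint from $\bigcup_{o \in O} T_o$ and is therefore fixed pointwise by $H_1 \cap \Stab_G(k_1)$, exactly as claimed.
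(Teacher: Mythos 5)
Your proposal is correct and follows essentially the same route as the paper: use the rigid-stabilizer property to confine every $H_1$-conjugate of $w_1$ (hence the whole normal closure $N$) to the subtrees over the orbit $Q(v_0)$, pick $u$ outside that orbit using non-transitivity of $Q$ on $\level_{k_1}$, and use $Q \cap \Stab_G(k_1) = \{1\}$ to identify $N$ with $H_1 \cap \Stab_G(k_1)$. Your treatment of the ``moreover'' part via the level action $\rho$ and the splitting $\iota$ is just a more explicit rendering of the paper's terser assertion that the quotient $H_1/N$ is precisely $Q$.
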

\begin{proof}
Since $w_1$ is in $\Rist_G(v_0)$, for any $q\in Q$, $qw_1q^{-1}$ belongs to $\Rist_G\bigl(q(v_0)\bigr)$.
Hence, for any $g\in H_1$, $gw_1g^{-1}$ fixes $T_v$ for $v\in\level_{k_1}\setminus Q(v_0)$.
By the choice of $k_1$, $Q(v_0)$ cannot contain all vertices of level $k_1$, thus there exists a vertex $u\in\level_{k_1}\setminus Q(v_0)$ such that $\{ T_{q(u)}: q \in Q \}$ are all fixed by the normal closure of $w_1$ in $H_1$.
We conclude by showing that $H_1 \cap \Stab_G(k_1)$ is exactly the normal closure of $w_1$ in $H_1$.
By definition, the element $w_1$, and hence any of its conjugates, are in $H_1 \cap \Stab_G(k_1)$.
Further, every non-trivial element of $Q$ is not in $\Stab_G(k_1)$.
Hence, the quotient of $H_1$ by the normal closure of $w_1$ in $H_1$ is precisely $Q$, and, moreover, it follows that $H_1 \cap \Stab_G(k_1)$ is the normal closure of $w_1$ in $H_1$, as desired.
\end{proof}

For a vertex $w$ and a set $S$ of vertices in $T$, we write $w \leq S$ if $w \leq v$ for some $v \in S$.
Suppose $w_1, \dots, w_i \in G$ of level $k_1,\dots, k_i$ have been constructed so that $H_i:= \left< Q, w_1, \ldots, w_i \right>$ with
\begin{enumerate}
	\item $w_j$ does not belongs to $W_j$ for all $1\leq j\leq i$,

	\item the normal closure of $\{ w_1, \ldots, w_i \}$ in $H_i$ is $H_i \cap \Stab_G(k_1)$, and

	\item for $2\leq j\leq i$, there exist vertices $u_j$ of level $k_j$ with $u_j\leq Q(u_{j-1})$, such that $H_j \cap \Stab_G(k_1)$ fixes every point in $\{ T_{q(u_j)}: q \in Q \}$.
\end{enumerate}

\begin{claim} \label{SecondClaim}
For all $k_{i+1}$ big enough there exists $w_{i+1} \in \Stab_G(k_{i+1})$ such that $H_{i+1}:=\left< Q, w_1, \ldots, w_{i+1} \right>$ is not included in any $W_1, \ldots, W_i, W_{i+1}$, and, further, there exists a vertex $u_{i+1}\leq Q(u_i)$ of level $k_{i+1}$ such that $H_{i+1} \cap \Stab_G(k_1)$ fixes every element in $\{ T_{q(u_{i+1})}: q \in Q \}$.
Moreover, $H_{i+1} \cap \Stab_G(k_1)$ is the normal closure of $w_1, w_2, \ldots, w_{i}, w_{i+1}$ in $H_{i+1}$.
\end{claim}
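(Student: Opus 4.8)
The plan is to reproduce the mechanics of the base case (Claim \ref{FirstClaim}), but now operating entirely inside the subtrees hanging below $u_i$, where the inductive hypothesis guarantees that $H_i\cap\Stab_G(k_1)$ acts trivially, so that there is room to plant a new generator without disturbing the fixed region recorded in condition~(3). First I would fix any $k_{i+1}>k_i$ large enough that $d^{k_{i+1}-k_i}>|Q|$; every assertion of the claim will hold for such $k_{i+1}$. The whole construction hinges on producing $w_{i+1}$ inside a \emph{single} rigid stabilizer $\Rist_G(v_0')$, for a suitable vertex $v_0'\leq u_i$ of level $k_{i+1}$, with $w_{i+1}\notin W_{i+1}$; since such a $w_{i+1}$ is supported on $T_{v_0'}$ it automatically lies in $\Stab_G(k_{i+1})$, and its $Q$-conjugates occupy disjoint subtrees, which is exactly what the bookkeeping below needs.

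To find $v_0'$, set $R=\langle \Rist_G(v) : v\leq u_i,\ v\in\level_{k_{i+1}}\rangle$. The key step is to show that $R$ has finite index in $G$; granting this, $W_{i+1}$ cannot contain $R$ (it is weakly maximal, hence of infinite index), so some $v_0'\leq u_i$ of level $k_{i+1}$ satisfies $\Rist_G(v_0')\not\subseteq W_{i+1}$, and we pick $w_{i+1}\in\Rist_G(v_0')\setminus W_{i+1}$. I expect the finite-index claim to be the main obstacle, since it is the one place the regular branch hypothesis is genuinely needed: under the self-similarity identification $\psi_{u_i}\colon\Stab_G(u_i)\xrightarrow{\sim}G$ with $T_{u_i}\cong T$, the vertices $v\leq u_i$ of level $k_{i+1}$ correspond to level-$(k_{i+1}-k_i)$ vertices, and using $\psi_v(\Rist_G(v))\supseteq K$ together with the fact that $K^{d^{m}}$ sits with finite index inside $\psi_m(\Stab_G(m))$ (here $m=k_{i+1}-k_i$), one sees that $\psi_{u_i}(R)$ contains the rigid stabilizer of level $m$, which is of finite index because $G$ is branch. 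As $\Stab_G(u_i)$ is itself finite index in $G$, it follows that $R$ is finite index in $G$.

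With $w_{i+1}$ chosen, the non-inclusion and normal-closure parts are routine. Since $H_{i+1}=\langle Q,w_1,\dots,w_{i+1}\rangle\supseteq H_i$ and $w_j\notin W_j$ for $j\leq i$, the group $H_{i+1}$ lies in none of $W_1,\dots,W_i$; and it lies in none of $W_{i+1}$ because $w_{i+1}\notin W_{i+1}$. For the normal closure $N$ of $\{w_1,\dots,w_{i+1}\}$ in $H_{i+1}$, I would argue exactly as in Claim \ref{FirstClaim}: each $w_j\in\Stab_G(k_1)$ while $Q\cap\Stab_G(k_1)=\{1\}$, so $\Stab_G(k_1)\trianglelefteq G$ gives $H_{i+1}/(H_{i+1}\cap\Stab_G(k_1))\cong Q$; comparing with the quotient $H_{i+1}/N$, which is generated by the image of $Q$ and surjects onto $Q$, forces $N=H_{i+1}\cap\Stab_G(k_1)$.

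It remains to locate $u_{i+1}$. Because $w_{i+1}$ is supported on $T_{v_0'}\subseteq T_{u_i}$ and, by hypothesis~(3), $N_i:=H_i\cap\Stab_G(k_1)$ fixes $T_{u_i}$ pointwise, the elements $w_{i+1}$ and $N_i$ commute; hence conjugating $w_{i+1}$ by $N_i$ is trivial, conjugating by $Q$ spreads its support over $\bigcup_{q\in Q}T_{q(v_0')}$, and $N=N_i\cdot\langle\langle w_{i+1}\rangle\rangle_{H_{i+1}}$ is supported on the union of the support of $N_i$ with $\bigcup_{q\in Q}T_{q(v_0')}$. I would then choose $u_{i+1}\leq u_i\leq Q(u_i)$ of level $k_{i+1}$ with $u_{i+1}\notin Q(v_0')$, which is possible since $d^{k_{i+1}-k_i}>|Q|\geq |Q(v_0')|$. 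Then the orbits $Q(u_{i+1})$ and $Q(v_0')$ are disjoint, so $\bigcup_{q}T_{q(u_{i+1})}$ avoids $\bigcup_q T_{q(v_0')}$; it also lies inside $\bigcup_q T_{q(u_i)}\subseteq\Fix(N_i)$. Therefore $N=H_{i+1}\cap\Stab_G(k_1)$ fixes every point of $\{T_{q(u_{i+1})}:q\in Q\}$, completing the inductive step; everything outside the finite-index claim for $R$ is support bookkeeping and a repetition of the base-case quotient computation.
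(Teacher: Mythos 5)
There is a genuine gap, and it sits exactly at the step you yourself identify as the crux: the claim that $R=\langle \Rist_G(v) : v\leq u_i,\ v\in\level_{k_{i+1}}\rangle$ has finite index in $G$. This is false. Every generator of $R$ is supported inside $T_{u_i}$, so $R\leq \Rist_G(u_i)$, and the rigid stabilizer of a \emph{single} vertex always has infinite index in a branch group on a tree of degree $d\geq 2$: the group $\Rist_G(k_i)$ contains $\Rist_G(u_i)\times\prod_{v\in\level_{k_i},\,v\neq u_i}\Rist_G(v)$, the other factors are infinite, so $\Rist_G(u_i)$ has infinite index in $\Rist_G(k_i)$ and hence in $G$. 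The flaw in your derivation is the arrow $\psi_{u_i}\colon\Stab_G(u_i)\xrightarrow{\sim}G$: the restriction map $\psi_{u_i}$ is surjective onto $G$ by self-similarity, but it is very far from injective --- its kernel contains everything supported off $T_{u_i}$, in particular the infinite group $\prod_{v\neq u_i}\Rist_G(v)$. So the fact that $\psi_{u_i}(R)$ has finite index in $G$ tells you nothing about the index of $R$ in $\Stab_G(u_i)$.

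Moreover the gap is not repairable within your scheme, because the conclusion you want to extract from it --- that some $v_0'\leq u_i$ satisfies $\Rist_G(v_0')\not\subseteq W_{i+1}$ --- can genuinely fail. Since $Q$ does not act transitively on $\level_{k_1}$ (choice of $k_1$), the orbit $Q(u_i)$ misses some vertices of level $k_i$, so $\langle Q,\Rist_G(u_i)\rangle\leq Q\ltimes \prod_{v\in Q(u_i)}\Rist_G(v)$ has infinite index in $G$ and is therefore contained in some weakly maximal subgroup $W$ containing $Q$. If $W_{i+1}$ is this $W$, then \emph{every} rigid stabilizer of a vertex below $u_i$ lies inside $W_{i+1}$, and no admissible $w_{i+1}$ supported in $T_{u_i}$ exists. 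This is precisely why the paper does not insist that $v_0$ lie below $u_i$: it takes $w_{i+1}\in\Rist_G(v_0)\setminus W_{i+1}$ for an \emph{arbitrary} $v_0\in\level_{k_{i+1}}$, which exists because the full level rigid stabilizer $\Rist_G(k_{i+1})$, generated by all the $\Rist_G(v)$ with $v\in\level_{k_{i+1}}$, has finite index in $G$ (branch property), while $W_{i+1}$ has infinite index. The price is a little extra bookkeeping, handled by choosing $k_{i+1}$ so that $Q$ is not transitive on $\level_{k_{i+1}}\cap\{T_{q(u_i)}:q\in Q\}$ and then taking $u_{i+1}\leq Q(u_i)$ with $u_{i+1}\notin Q(v_0)$: conjugates of $w_{i+1}$ by $H_i\cap\Stab_G(k_1)$ and by $Q$ are supported on subtrees disjoint from $\{T_{q(u_{i+1})}:q\in Q\}$, whether or not $v_0$ sits below $Q(u_i)$. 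Your remaining steps (the normal closure computation, the commutation of $w_{i+1}$ with $H_i\cap\Stab_G(k_1)$, the choice of $u_{i+1}$ avoiding a $Q$-orbit) are correct, but they all presuppose the unobtainable $w_{i+1}$.
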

\begin{proof}
Since $Q$ is finite, it is possible to find $k_{i+1}>k_i$ such that $Q$ does not acts transitively on $\level_{k_{i+1}}\cap\{ T_{q(u_{i})}: q \in Q \}$.
Since $W_{i+1}$ is of infinite index in $G$ and $\Rist_G(k_{i+1})$ is of finite index, there exist an element $w_{i+1}\in\Rist_G(v_0)\setminus W_{i+1}$ for some $v_0 \in \level_{k_{i+1}}$.

From now on, when we write $v$, we will always assume it is a vertex of level $k_{i+1}$.
By assumption, $H_i\cap\Stab_G(k_1)$ acts trivially on $\{ T_{q(u_i)}: q \in Q \}$.
Therefore, for any $p \in H_i\cap \Stab_G(k_1)$, $pw_{i+1}p^{-1}$ fixes $T_v$ for all $v \leq Q(u_i)$ with $v\neq v_0$.
By definition of $k_{i+1}$, there exists $u_{i+1} \leq  Q(u_i)$ of level $k_{i+1}$ with $u_{i+1}\notin Q(v_0)$.
 For such a $u_{i+1}$, every element in $\{T_{q(u_{i+1})}: q\in Q\}$ is fixed by the normal closure of $\{w_1,\dots,w_{i+1}\}$ in $H_{i+1}$.
 In fact, since each of the trees in $\{T_{q(u_{i+1})}: q\in Q\}$ are contained in $\{T_{q(u_{i})}: q\in Q\}$, it follows that for all $q \in Q$ and $j = 1, \ldots, i+1$, $q w_j q^{-1}$ fixes $\{T_{q(u_{i+1})}: q\in Q\}$.
 Set $N$ to be the normal closure of $\{w_1, \ldots, w_{i+1}\}$ in $H_{i+1}$.
Since $N$ is generated by $\{ q w_j q^{-1}: q \in Q \text{ and } j = 1, \ldots, i+1 \}$, it follows that $N$ fixes $\{T_{q(u_{i+1})}: q\in Q\}$.
Since $Q \cap \Stab_G(k_1)$ is trivial and $N \leq \Stab_G(k_1)$, it follows that $N = \Stab_G(k_1) \cap H_{i+1}$, as desired.
\end{proof}

By applying Claims \ref{FirstClaim} and \ref{SecondClaim}, we construct an infinite sequence $k_1<k_2<\dots$ and an infinite sequence $\{ w_i \}_{i\geq1}$ in $G$ such that:

\begin{itemize}
	\item $H_i:= \left< Q, w_1, \ldots, w_i \right>$ is not included in any $W_1, \ldots, W_i$,

	\item there exists a vertex $u_i \in T$ of level $k_i$, such that $H_i \cap \Stab_G(k_1)$ fixes every point in $T_{u_i}$.
\end{itemize}

We further claim that for any $i \in \N$, the associated group $H_i$ has infinite index in $G$.
Suppose, for the sake of contradiction, that $H_i$ is of finite index in $G$ for some $i$.

Note that $H_i\cap \Stab_G(k_1)$ acts trivially on some $T_v$ where $v \in T$ has level $k_i > k_1$.
Since $G$ is regular branch over $K$, $K^{d^{k_i}}$ is a finite index subgroup of $\psi_{k_i}(\Stab_G(k_i)\cap K)$.
This implies that $\psi_{k_i}\bigl(\Stab_G(k_i)\bigr)$ is a finite index subgroup of $G^{d^{k_i}}$.
The subgroups $H_i$ and $\Stab_G(k_i)$ are of finite index in $G$, thus $\Stab_G(k_i)\cap H_i$ has finite index in $\Stab_G(k_i)$.
Since $\psi_{k_i}\bigl(\Stab_G(k_i)\bigr)$ is a finite index subgroup of $G^{d^{k_i}}$, $\psi_{k_i}(\Stab_G(k_i)\cap H_i)$ has finite index in $G^{d^{k_i}}$.
This conclusion is impossible as every element in  $H_i \cap \Stab_G(k_i) \leq H_i \cap \Stab_G(k_1)$ fixes every point in $T_{u_i}$.
The claim is now shown.

Now, given that $H_i$ is never of finite index, we claim that
$H = \cup_{i\geq 1} H_i$
is an infinite-index subgroup of $G$. 
Suppose not, then $H$, being of finite index in a finitely generated group $G$ is finitely generated, say,  by elements $a_1, \ldots, a_m$.
Thus, as the sequence of groups $\{ H_i \}_{i\geq1}$ is increasing, there is a single $H_i$ that must contain each $a_1, \ldots, a_m$.
It follows that $H_i$ is of finite index -- a contradiction.

Since $G$ is finitely generated, there exists a weakly maximal subgroup $A$ containing $H$ in $G$.
Therefore, $A$ contains $Q$ and hence equal to one of the $W_i$.
On the other hand, for any $W_i$, $H$ contains an element that is not in $W_i$, which give us the desired contradiction.
Thus, there cannot be countably many weakly maximal subgroups containing $Q$.
This completes the proof of Theorem~\ref{MainThm}.~\qedsymbol

We now prove Corollary \ref{MainCor} and Theorem \ref{ThmUnique} as direct corollaries of Theorem~\ref{MainThm}.
\begin{cor*}
Let $T$ be a regular rooted tree and $G\leq \Aut(T)$ be a \fgrbg.
Suppose that $G$ contains a finite subgroup $Q$ that does not fix any point in $\partial T$.
Then there exist uncountably many automorphism equivalence classes of weakly maximal subgroups of $G$, all distinct from  classes of  parabolic subgroups associated with the action of $G$ on $T$.
\end{cor*}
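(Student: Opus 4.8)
The plan is to read this off Theorem \ref{MainThm}. Since $Q$ is finite, that theorem furnishes uncountably many automorphism equivalence classes of weakly maximal subgroups of $G$ that contain $Q$; because $G$ is branch, every parabolic subgroup $\Stab_G(\xi)$ (with $\xi\in\partial T$) is also weakly maximal, so it is meaningful to ask whether any of these classes is the class of a parabolic subgroup. I would aim to show that uncountably many of them are not.

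First I would record the elementary but crucial fact that no parabolic subgroup can contain $Q$: if $Q\leq\Stab_G(\xi)$ then every element of $Q$ fixes $\xi$, so $Q$ fixes the boundary point $\xi$, contrary to the hypothesis. Consequently every weakly maximal subgroup $W$ supplied by Theorem \ref{MainThm}, which by construction satisfies $Q\leq W$, is itself non-parabolic. The real content, however, lies one level up, at automorphism classes: I must exclude the possibility that the class of such a $W$ coincides with the class $[\Stab_G(\xi)]$ of a parabolic subgroup. If it did, say $\alpha(W)=\Stab_G(\xi)$ for some $\alpha\in\Aut(G)$, then $\alpha(Q)\leq\Stab_G(\xi)$, so the finite subgroup $\alpha(Q)$ fixes $\xi$. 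Thus the whole statement reduces to the assertion that \emph{no automorphic image of $Q$ fixes a point of $\partial T$.}

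This reduction is the main obstacle, and it is genuinely subtle because an abstract automorphism of $G$ need not be induced by a tree automorphism: a priori $\alpha(Q)$ could fix a boundary point even though $\Fix(Q)\cap\partial T=\emptyset$ (equivalently, even though $T_Q$ is finite, by the K\"onig's lemma argument opening \S\ref{sec:proof}). I would attack it using that $\Aut(G)$ is countable, so that $\{\alpha(Q):\alpha\in\Aut(G)\}$ is a countable family of finite subgroups and each automorphism class is a countable orbit; a pigeonhole argument then shows that if only countably many of our classes were non-parabolic, a single image $\alpha(Q)$ would have to fix uncountably many boundary points (these are pairwise distinct since parabolic subgroups are pairwise distinct in a weakly branch group), and hence pointwise fix a subtree with uncountable boundary. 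The step I expect to cost the most is ruling this out from regular branchness alone: when $\Aut(G)$ is described geometrically --- as for the first Grigorchuk group and the \GGS{} groups admitting a unique branch action, precisely the input used in Theorem \ref{ThmUnique} --- the property of fixing a boundary point is automorphism invariant and the reduction is immediate, so the remaining task is to isolate exactly how much of this invariance is already forced by the branch structure.
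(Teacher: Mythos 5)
Your setup and first reduction are correct and match the paper: Theorem \ref{MainThm} supplies uncountably many classes of weakly maximal subgroups containing $Q$; no parabolic subgroup can contain $Q$; and an equality $\alpha(W)=\Stab_G(\xi)$ would force the finite group $\alpha(Q)$ to fix $\xi$. But the proposal stops exactly where the real proof has to begin: you never establish that no automorphic image of $Q$ fixes a boundary point, and you explicitly leave it open (``the remaining task is to isolate exactly how much of this invariance is already forced by the branch structure''). This is a genuine gap, not a routine verification. Moreover, your fallback --- countability of $\Aut(G)$ plus pigeonhole, reducing matters to showing that a single $\alpha(Q)$ cannot fix uncountably many boundary points --- does not bring you closer to a contradiction: in a torsion branch group such as the Grigorchuk group, many finite subgroups do fix uncountably many boundary points (for instance any finite subgroup of a rigid stabilizer $\Rist_G(v)$ fixes all of $\partial T$ outside $\partial T_v$), so no counting argument ``from regular branchness alone'' can exclude this for $\alpha(Q)$; one must know how abstract automorphisms of $G$ interact with the boundary action.

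The missing ingredient is precisely what the paper invokes, and your reason for not using it rests on a misidentification of the relevant rigidity theorem. Geometric realization of automorphisms is not special to the groups of Theorem \ref{ThmUnique} (that input is Grigorchuk--Wilson \cite{MR2011117}, about uniqueness of \emph{branch actions}); rather, for \emph{every} weakly branch group, Theorem 7.3 of Lavreniuk--Nekrashevych \cite{Lavreniuk} states that each $\phi\in\Aut(G)$ is implemented by a homeomorphism $\psi$ of $\partial T$, i.e.\ $\phi(g)(\xi)=\psi^{-1}\cdot g\cdot\psi(\xi)$ for all $g\in G$, $\xi\in\partial T$. Consequently automorphisms send parabolic subgroups to parabolic subgroups, so if some $\alpha(W)$ were parabolic then $W$ itself would be parabolic, contradicting $Q\leq W$ and the hypothesis that $Q$ fixes no point of $\partial T$. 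With this citation your argument closes at once and the pigeonhole step becomes unnecessary; without it, the proposal does not prove the corollary.
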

\begin{proof}
Let $\phi$ be an automorphism of $G$.
Since $G$ is weakly branch, by Theorem 7.3 of \cite{Lavreniuk} there exists $\psi$ a homeomorphism of $\partial T$ such that for all $g\in G$ and $\xi\in\partial T$ we have $\phi(g)(\xi)=\psi^{-1}\cdot g\cdot\psi(\xi)$.
Therefore, automorphisms of $G$ send parabolic subgroups to parabolic subgroups.
On the other hand, if $W$ is one of the uncountably many weakly maximal subgroups containing $Q$, it fixes no ray and therefore is not parabolic.
\end{proof}
\begin{thm*}
Let $G$ be either the first Grigorchuk group, or a \GGS{} group with $E=(1,-1)$ or $E=(\epsilon_1,\dots,\epsilon_{p-1})$, $p$ prime, such that the $\epsilon_i$ are not all zero and not all non-zero.
Then $G$ has uncountably many automorphism equivalence classes of weakly maximal subgroups, all distinct from classes of parabolic subgroups of any branch action of $G$ on a spherically regular tree.
\end{thm*}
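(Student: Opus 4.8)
The plan is to derive the theorem from Corollary \ref{MainCor} together with the essential uniqueness of the branch action recorded in \cite{MR2011117}. Fix the standard action of $G$ on the tree $T$ on which it is defined: the binary tree for the Grigorchuk group and the $p$-regular tree for the \GGS{} groups. Let $a\in G$ be the rooted generator. For the Grigorchuk group $a$ has order $2$ and acts on the boundary by $a(xw)=\overline{x}w$; for the \GGS{} groups $a$ has order $p$ and cyclically permutes the first letter of every ray. In either case $Q:=\langle a\rangle$ is a finite cyclic subgroup of prime order, and since $a$ changes the first letter of every boundary sequence, $Q$ fixes no point of $\partial T$. As these groups are finitely generated regular branch (Section \ref{sec:grig}), the hypotheses of Corollary \ref{MainCor} are satisfied.

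First I would invoke Corollary \ref{MainCor} for the standard action, producing uncountably many automorphism equivalence classes of weakly maximal subgroups of $G$, none of which is the automorphism class of a parabolic subgroup of the action on $T$. What remains is to promote ``parabolic for the standard action'' to ``parabolic for an arbitrary branch action on a spherically regular tree.''

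The remaining step relies on \cite{MR2011117} and is the one I expect to require the most care. Suppose $\rho'\colon G\to\Aut(T')$ is a branch action on a spherically regular tree $T'$. By the essential uniqueness of the branch action, $\rho'$ is equivalent to the standard action $\rho$ on $T$: there exist a rooted tree isomorphism $\theta\colon T'\to T$ and an automorphism $\alpha\in\Aut(G)$ with $\theta\circ\rho'(g)\circ\theta^{-1}=\rho(\alpha(g))$ for all $g\in G$. Tracking a boundary point $\eta\in\partial T'$ through this identity gives $\Stab_{\rho'}(\eta)=\alpha^{-1}\bigl(\Stab_{\rho}(\theta(\eta))\bigr)$, so every parabolic subgroup of the $T'$-action is the image under an automorphism of $G$ of a parabolic subgroup of the standard action. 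The delicate point is to extract from \cite{MR2011117} precisely this equivalence of actions, with control of the twisting automorphism $\alpha$, rather than a weaker statement about the trees alone.

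Granting this, the two families of parabolic subgroups determine the same automorphism equivalence classes, and the conclusion follows at once: each of the uncountably many classes produced by Corollary \ref{MainCor} avoids every automorphism class of a parabolic subgroup of the standard action, hence avoids every parabolic subgroup of every branch action of $G$ on a spherically regular tree. This gives uncountably many automorphism equivalence classes of weakly maximal subgroups, none parabolic for any such action, completing the proof.
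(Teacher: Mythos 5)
Your proposal is correct and follows essentially the same route as the paper: take $Q=\langle a\rangle$, apply Corollary \ref{MainCor} to the standard action, and then use the rigidity result of \cite{MR2011117} to identify parabolic subgroups of an arbitrary branch action on a spherically regular tree with (automorphism-images of) parabolic subgroups of the standard action. The paper's own proof is just a compressed version of this argument, relying on the same two ingredients.
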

\begin{proof}
Take $Q=\langle a\rangle$. 
Then, there exists uncountably many automorphism equivalence classes of weakly maximal subgroups of $G$ all distinct from classes of parabolic subgroups of the action of $G$ on $T$. The rigidity described in \cite{MR2011117} implies that for any branch action of $G$ on a spherically regular tree, the parabolic subgroups are the same as the ones from the original action.
\end{proof}

%
%

\end{document}